\newtheorem{theorem}{Theorem}[section]
\newtheorem{conjecture}[theorem]{Conjecture}
\newtheorem{definition}[theorem]{Definition}
\newtheorem{lemma}[theorem]{Lemma}
\newtheorem{remark}[theorem]{Remark}
\newenvironment{proof}[1][Proof]{\noindent\textbf{#1.} }{\ \rule{0.5em}{0.5em}}
\begin{document}

\title{ Piecewise linear actions and Zimmer's program}
\author{Shengkui Ye}
\maketitle

\begin{abstract}
We consider Zimmer's program of lattice actions on surfaces by PL
homomorphisms. It is proved that when the surface is not the torus or Klein
bottle the action of any finite-index subgroup of $\mathrm{SL}(n,\mathbb{Z}%
), $ $n\geq 5$ (more generally for any $2$-big lattice), factors through a
finite group action. The proof is based on an establishment of a PL version
of Reeb-Thurston's stability.
\end{abstract}

\section{Introduction}

This paper introduces a new approach to study lattice actions on compact
manifolds.

The special linear group $\mathrm{SL}_{n}(\mathbb{Z})$ acts on the spheres $%
S^{n-1}$ by $x\longmapsto Ax/\Vert x\Vert $ for $x\in S^{n-1}$ and $A\in 
\mathrm{SL}_{n}(\mathbb{Z})$, i.e. via the action on the space of infinite
rays based at $0\in \mathbb{R}^{n}$. It is believed that this action is
minimal in the following sense.

\begin{conjecture}
\label{conj}Any action of a finite-index subgroup of $\mathrm{SL}_{n}(%
\mathbb{Z})$ $(n\geq 3)$ on a compact $r$-manifold by homeomorphisms factors
through a finite group action if $r<n-1.$
\end{conjecture}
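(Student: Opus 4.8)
The plan is to follow the superrigidity philosophy — a measurable boundary map together with algebraic constraints — but carried out in a form that survives the total loss of a derivative. Write $\Gamma$ for the finite-index subgroup and $G=\mathrm{SL}_n(\mathbb{R})$, so that $\Gamma$ is a lattice in $G$. I would first invoke Margulis's normal subgroup theorem: the kernel $K$ of the action $\rho:\Gamma\to\mathrm{Homeo}(M)$ is normal, hence either finite-central or of finite index. Since the center of $\Gamma$ is finite, it suffices to prove that $\rho$ is \emph{not} faithful — equivalently, that some single nontrivial element acts as the identity — for then $K$ is infinite, hence finite-index, and $\rho(\Gamma)$ is finite. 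This reduces the full conjecture, for arbitrary homeomorphisms, to exhibiting one nontrivial element acting trivially, which is the content that the hypothesis $r<n-1$ must supply.

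Next I would build a measurable $\Gamma$-equivariant boundary map. Let $P\leq G$ be a minimal parabolic and $B=G/P$ the Furstenberg boundary, on which $\Gamma$ acts amenably because $P$ is amenable. Since $M$ is compact, the space $\mathrm{Prob}(M)$ of Borel probability measures is compact and metrizable and the $\Gamma$-action on it is continuous; amenability of the action on $B$ then yields a measurable $\Gamma$-equivariant map
\[
\phi:B\longrightarrow \mathrm{Prob}(M).
\]
The decisive feature is that this construction uses only continuity and compactness, never differentiability, so it is available for a bare homeomorphism action. The dimension hypothesis should enter through $\phi$: the minimal flag variety $B$, and already its projective quotient $\mathbb{RP}^{n-1}$, has dimension $n-1$, exactly the threshold in the conjecture. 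The goal is to show that a faithful $\rho$ would force $\phi$ to encode an essentially injective equivariant image of a variety of dimension $\ge n-1$ inside $M$, which is impossible when $r<n-1$.

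To make that step precise I would analyse the disintegration of $\phi$ and exploit the abundant commuting and distorted structure of $\Gamma$. By Carter--Keller bounded generation it is enough to control the elementary unipotents $E_{ij}$, and the relations $[E_{ik}(1),E_{kj}(m)]=E_{ij}(m)$ (available since $n\ge 3$ provides a third index $k$) make each $E_{ij}$ exponentially distorted in $\Gamma$; a distorted homeomorphism cannot exhibit uniform expansion, which limits how $\phi$ can spread a measure along the corresponding unipotent direction. Combining the stationarity of the harmonic measure on $B$ with these distortion constraints, I would argue that $\phi$ cannot be diffuse in every direction, so its values concentrate on finite subsets of $M$; equivariance of this finite-valued map under the Zariski-dense $\Gamma$-action on $B$ should then force, by an algebraic-hull argument, either a factorization through a homogeneous space of dimension $\ge n-1$ or the triviality of a unipotent, and only the latter survives $r<n-1$.

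The hard part — and the reason the theorem established in this paper assumes PL regularity — is precisely the passage from the measurable boundary map $\phi$ to genuine algebraic constraints. In Zimmer's smooth program this passage is supplied by the derivative cocycle and cocycle superrigidity, producing a measurable homomorphism into an algebraic group; for a bare homeomorphism action there is no derivative cocycle, hence no algebraic hull directly at hand. The main obstacle is therefore to manufacture a substitute: to show that the distortion and commutation relations in $\Gamma$, together with compactness of $\mathrm{Prob}(M)$ and the low dimension of $M$, impose enough rigidity on $\phi$ to conclude non-faithfulness without ever differentiating. Constructing this purely topological surrogate for cocycle superrigidity below dimension $n-1$ is the crux on which the full conjecture turns.
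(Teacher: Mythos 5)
The statement you were asked to prove is Conjecture \ref{conj}, which the paper states as \emph{open}: it offers no proof, and instead proves only a PL special case for $2$-big lattices on surfaces (Theorem \ref{main}), by entirely different means --- Fuller's periodic point theorem (Lemma \ref{fuller}) to obtain a fixed point from $\chi(M)\neq 0$, almost simplicity from the Margulis Finiteness Theorem, and a new PL analogue of Reeb--Thurston stability (Theorem \ref{rt}) built from one-sided derivatives, which exist in the PL category but not for bare homeomorphisms. Judged on its own terms, your proposal is a program rather than a proof, and you say so yourself: the passage from the boundary map $\phi:B\rightarrow\mathrm{Prob}(M)$ to non-faithfulness --- your ``topological surrogate for cocycle superrigidity'' --- is left entirely open, and it is precisely where all of the content lies. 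The parts that are solid (reduction via the normal subgroup theorem to exhibiting one nontrivial element of the kernel; existence of the measurable equivariant map $\phi$ from amenability of the $\Gamma$-action on $G/P$) are standard and use neither the hypothesis $r<n-1$ nor any property of $M$ beyond compactness, so they cannot by themselves be where the conjecture is decided.

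Concretely, two intermediate steps would fail as stated. First, the claim that ``a distorted homeomorphism cannot exhibit uniform expansion'' is not a theorem in the $C^{0}$ category: the known distortion arguments (Polterovich \cite{Po}; Franks--Handel \cite{fh}, both cited in the paper) require an invariant area form together with smoothness or symplectic regularity, and for bare homeomorphisms the exponential distortion of the elementary matrices $E_{ij}$ in $\Gamma$ imposes no known dynamical constraint whatsoever. Second, the dichotomy you extract at the end --- either $\phi$ factors through a homogeneous space of dimension at least $n-1$, or some unipotent acts trivially --- has no justification: the algebraic hull you appeal to is an artifact of the derivative cocycle in Zimmer's smooth program, and with no derivative there is no measurable linear-algebraic structure on which Zariski density can act; nothing in your argument excludes $\phi$ taking finitely many values in a way compatible with a faithful action carrying no algebraic structure at all. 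Since the $C^{0}$ conjecture remains open even for surfaces, your text is best read as an accurate description of the obstruction; the paper's strategy (global fixed point, then Theorem \ref{rt} in place of Reeb--Thurston stability) illustrates why some regularity hypothesis --- there, piecewise linearity --- currently appears indispensable for carrying out the second, local-to-global step.
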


The smooth version of such conjecture was formulated by Farb and Shalen \cite%
{fs}, which is related to Zimmer's program \cite{z3}. Conjecture \ref{conj}
could be a special case of a general conjecture in Zimmer's program, in
which the special linear group is replaced by an arbitrary irreducible
lattice $\Gamma $ in a semisimple Lie group $G$ of $\mathbb{R}$-rank at
least 2, and the integer $n$ is replaced by a suitable integer $h(G).$ Some
form of Conjecture \ref{conj} has been discussed by Weinberger \cite{sw}.
These conjectures are part of a program to generalize the Margulis
Superrigidity Theorem to a nonlinear context.

Even for the smooth case, it is difficult to prove this conjecture. In
general, we have to assume either that the group action preserves additional
geometric structures or that the lattices themselves are special. The
following is an incomplete list of some results in this direction. For more
details of Zimmer's program and related topics, see survey articles of
Zimmer and Morris \cite{zm}, Fisler \cite{fi} and Labourie \cite{la}.

When $r=1$ and $M=S^{1}$, Witte \cite{Wi} proves that Conjecture \ref{conj}
is true for an arithmetic lattice $\Gamma $ with $\mathbb{Q}$-rank$(\Gamma
)\geq 2$.

When $r=2,$ the group action is smooth real-analytic and $M$ is a compact
surface other than the torus or Klein bottle, Farb and Shalen \cite{fs}
prove that Conjecture \ref{conj} is true for $n\geq 5$ (more generally for $%
2 $-big lattices). When the group action is smooth real-analytic and
volume-preserving, they also show that this result could be extended to all
compact surfaces.

Polterovich (see Corollary 1.1.D of \cite{Po}) proves that if $n\geq 3$,
then any action by $\mathrm{SL}(n,\mathbb{Z})$ on a closed surface other
than the sphere $S^{2}$ and the torus $T^{2}$ by area preserving
diffeomorphisms factors through a finite group action. When $r=2$ and the
group action is by area preserving diffeomorphisms, Franks and Handel \cite%
{fh} prove that Conjecture \ref{conj} is true for an almost simple group
containing a subgroup isomorphic to the three-dimensional integer Heisenberg
group (eg. any finite-index subgroup of $\mathrm{SL}_{n}(\mathbb{Z})$ for $%
n\geq 3$).

Weinberger \cite{we} shows that Conjecture \ref{conj} is true for $\mathrm{SL%
}_{n}(\mathbb{Z})$ itself and $M=T^{r},$ the torus of dimension $r.$ Bridson
and Vogtmann \cite{bv} prove Conjecture \ref{conj} for $\mathrm{SL}_{n}(%
\mathbb{Z})$ and $M=S^{r},$ the sphere of dimension $r.$

In this article, we consider lattice actions on manifolds by PL
homeomorphisms. Let's recall from Farb and Shalen \cite{fs} that an (uniform
or non-uniform) irreducible lattice $\Gamma $ in a semisimple group $G$ of $%
\mathbb{R}$-rank at least $2$ is $1$-big if either $\mathbb{Q}$-rank$(\Gamma
)\geq 2$ or the centralizer of some infinite-order element of $\Gamma $ has
a subgroup isomorphic to an irreducible lattice in some semisimple Lie group
of $\mathbb{R}$-rank at least $2.$ A lattice is $(k+1)$-big if the
centralizer of some infinite-order element has a subgroup isomorphic to a $k$%
-big lattice. For example, any finite-index subgroup of $\mathrm{SL}_{n}(%
\mathbb{Z})$ is $k$-big for $n\geq 2k+1.$ Our main result is the following.

\begin{theorem}
\label{main}Let $\Gamma $ be a $2$-big lattice and $M$ a compact surface
(with or without boundary) other than the torus or Klein bottle. Then any PL
action of $\Gamma $ on $M$ factors through a finite group action.
\end{theorem}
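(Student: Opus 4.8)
The plan is to follow the overall strategy of Farb–Shalen but replace the real-analytic rigidity input with a PL version of the Reeb–Thurston stability theorem, which the abstract says is established earlier in the paper. The guiding principle is that a $2$-big lattice $\Gamma$ contains enough commuting copies of $1$-big lattices that, once a single infinite-order element is shown to act trivially, rigidity of the lattice forces the whole action to factor through a finite group. So the heart of the matter is to produce a fixed point and then to leverage PL stability to show the action is locally trivial there.

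\medskip

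First I would reduce to the case where $M$ has no boundary, since a PL action permutes the (finitely many) boundary components and by passing to a finite-index subgroup $\Gamma_0$ we may assume each boundary component and each complementary region is preserved; it then suffices to handle the closed pieces, and one can double along the boundary if convenient. Next, because $M$ is a compact surface other than the torus or Klein bottle, its Euler characteristic is nonzero (or it is the sphere/projective plane with small fundamental group), so the relevant mapping class group and $\mathrm{Out}(\pi_1 M)$ are, up to finite index, \emph{small}: there is no faithful action of a $2$-big lattice on $H_1(M)$ or on $\pi_1(M)$ because such lattices have no nontrivial finite-dimensional representations of the relevant kind (Margulis superrigidity, together with the fact that these lattices have property (T) or finite abelianization). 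Concretely, I would argue that the induced action of $\Gamma$ on $H_*(M;\mathbb{Q})$ and on $\pi_1(M)$ factors through a finite group; this kills the ``global topological'' part of the action and reduces to understanding a finite-index subgroup $\Gamma_1$ acting isotopically trivially.

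\medskip

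The key local step is to find a common fixed point and apply PL Reeb–Thurston stability. Using that $\Gamma$ is $2$-big, I would select an infinite-order element $\gamma$ whose centralizer $C(\gamma)$ contains a $1$-big lattice $\Lambda$. An averaging or fixed-point argument (for instance, exploiting that a $1$-big sublattice cannot act nontrivially by PL homeomorphisms near a point without violating the PL stability theorem, or using a Lefschetz/Euler-characteristic count to force fixed points on a surface of nonzero Euler characteristic) produces a point $p$ fixed by a large subgroup. The PL version of Reeb–Thurston stability then asserts that a group with suitable rigidity fixing $p$ and acting trivially on the germ of $1$-dimensional ``leaves'' through $p$ must act trivially on a whole neighborhood of $p$; applied to $\Lambda$, this shows $\Lambda$ acts trivially near $p$. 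Because the fixed set of a PL action is a subcomplex and the action is real-analytic-like in the PL category (the set where the germ is trivial is open and closed in an appropriate sense), I would propagate triviality from the neighborhood of $p$ across all of $M$ by a connectedness argument, concluding that $\Lambda$ acts trivially globally.

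\medskip

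Finally I would bootstrap from one trivially-acting sublattice to the whole group. Once $\Lambda \subset C(\gamma)$ acts trivially on $M$, the normal-subgroup structure and the $2$-big hypothesis let me vary $\gamma$ (or conjugate $\Lambda$ by elements of $\Gamma$) to produce many trivially-acting copies whose union generates a finite-index subgroup $\Gamma_2 \le \Gamma$; by the normal subgroup theorem for higher-rank lattices, the kernel of the $\Gamma$-action is either finite-index or central, and having a whole $1$-big lattice in the kernel forces finite index. Hence the action of $\Gamma$ factors through the finite quotient $\Gamma/\Gamma_2$, which is the desired conclusion. I expect the main obstacle to be the local step: establishing that the PL Reeb–Thurston stability theorem applies, i.e. verifying its hypotheses (the existence of the fixed point and the triviality of the linearized/germinal action of the rigid subgroup) in the PL category, where one lacks the analytic tools of \cite{fs} and must instead control germs of PL homeomorphisms combinatorially. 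Propagating local triviality to a global statement on a surface that may have nonzero genus is the second delicate point, requiring that the trivially-acting locus be shown to be both open and closed.
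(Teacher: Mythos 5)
Your proposal has the right skeleton (produce a fixed point for a $1$-big sublattice $\Lambda\subset C(\gamma)$, apply PL Reeb--Thurston stability, then invoke Margulis finiteness/normality to conclude), but the two places you yourself flag as delicate are exactly where the paper does its real work, and your sketch does not supply the arguments. First, the step from ``some power of $\phi(\gamma)$ has a periodic point'' (which the paper gets from Fuller's theorem, using $\chi(M)\neq 0$) to ``a finite-index subgroup of $\Lambda$ has a \emph{common} fixed point \emph{and} acts trivially on the tangent sphere there'' is left as ``an averaging or fixed-point argument.'' The paper bridges this gap with two specific tools: a structural lemma showing that $\mathrm{Fix}(f^k)$ is a finite simplicial complex containing a canonical closed submanifold $N_f$ of codimension $\geq 1$ that is invariant under anything commuting with $f^k$ (so $\Lambda$ permutes finitely many circles or points, and a finite-index subgroup fixes one); and the circle theorem for $1$-big lattices acting by PL homeomorphisms, which is applied \emph{twice} --- once to the invariant circle inside the fixed set and once more to the tangent circle $R_p\cong S^1$ at a fixed point $p$ --- to verify hypothesis (i) of the stability theorem. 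Without the second application to the tangent sphere, you cannot invoke PL stability at all, since triviality of the germ on rays is precisely its hypothesis.

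Second, your reduction to the closed case by doubling along the boundary fails: the double of the annulus is the torus and the double of the M\"obius band is the Klein bottle, both excluded from the theorem, and these two bounded surfaces have $\chi=0$, so your repeated assertion that the Euler characteristic is nonzero is false for them and the Lefschetz/Fuller fixed-point input disappears. The paper instead treats the bounded case directly: a finite-index subgroup preserves a boundary circle, the circle theorem gives a finite-index subgroup acting trivially on it, the tangent sphere at a boundary point is an interval $[0,1]$ on which triviality follows from the one-dimensional lemma, and then PL stability propagates triviality into $M$. Finally, the paragraph about $\mathrm{Out}(\pi_1 M)$, superrigidity and property (T) killing the action on $H_*(M)$ is not needed and does not substitute for the missing local analysis; the paper never uses it.
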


The proof of Theorem \ref{main} consists of two steps, similar to those of
some other results of this kind. The first step is to obtain a global fixed
point. The second step is to show that the action is trivial for a subgroup
of finite index in $\Gamma $. In the smooth real-analytic case, there are
also two such steps essentially (cf. Farb and Shalen \cite{fs}). For the
first step, it is achieved by studying the analytic set of the fixed point
set of a map and for the second step it is easy. For smooth action, the
second step is normally achieved by Reeb-Thurston's stability. Based on such
stability, Zimmer \cite{zi5} shows that the general version of Conjecture %
\ref{conj} is true if $\Gamma $ has a global fixed point. The work of Farb
and Shalen \cite{fs} is an important motivation for this work, and the
strategies of proofs in \cite{fs} often play a role in several of our
arguments in this short article.

Compared with real-analytic actions, the first step is easier for PL
actions, since fixed point sets are simplicial complexes. However, the
second step is much more difficult, since the Reeb-Thurston stability is not
available for PL actions as it is only valid for $C^{1}$ actions. Therefore,
we have to establish a PL version of Reeb-Thurston's stability. For this, we
introduce a notion of tangent spheres for PL manifolds (for details, see
Definition \ref{ts}), which plays the same role as tangent spaces for smooth
manifolds.

\begin{theorem}
\label{rt}Let $G$ be a finitely generated group acting on a connected
manifold $M$ by PL homeomorphisms with a global fixed point $p$. Suppose that

\begin{enumerate}
\item[(i)] $G$ acts trivially on the tangent sphere $R_{p};$

\item[(ii)] the first homology group $H_{1}(G;\mathbb{R})=0,$
\end{enumerate}

then the group action is trivial, i.e. $G$ fixes every point of $M.$
\end{theorem}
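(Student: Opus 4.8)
The plan is to prove that the set of global fixed points is both open and closed in $M$, so that connectedness forces it to be all of $M$. Closedness is immediate since each $g \in G$ acts by a homeomorphism, so $\mathrm{Fix}(g)$ is closed and $\mathrm{Fix}(G) = \bigcap_{g} \mathrm{Fix}(g)$ is closed. The entire difficulty lies in openness: I must show that if $G$ fixes a point $q$ and acts trivially on the relevant local PL data at $q$, then $G$ fixes a whole neighborhood of $q$. Since $M$ is connected and we are handed one fixed point $p$ with trivial action on the tangent sphere $R_p$, the strategy is to propagate the trivial-action hypothesis from $p$ to nearby fixed points and iterate.

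Let me think about how I'd prove openness near $p$.

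First, I would set up local PL coordinates at the fixed point $p$. By the definition of the tangent sphere (Definition~\ref{ts}), a neighborhood of $p$ is PL-homeomorphic to the open cone on $R_p$, with $p$ the cone point; a point near $p$ is recorded by a pair consisting of a ray direction in $R_p$ and a radial parameter. Each $g \in G$ fixes $p$ and, by hypothesis (i), acts trivially on $R_p$, so in these cone coordinates $g$ sends a point with direction $v$ to another point with the same direction $v$, merely rescaling the radial parameter. Thus along each ray emanating from $p$, the group $G$ acts by a one-parameter family of PL self-homeomorphisms of a half-line fixing the origin. The plan is to use hypothesis (ii), $H_1(G;\mathbb{R})=0$, to kill this radial action and conclude triviality in a cone neighborhood.

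Here is the mechanism I would use. A germ of an orientation-preserving PL homeomorphism of a half-line $[0,\infty)$ fixing $0$ is determined near $0$ by its right-hand derivative (the slope of its first linear piece), which is a positive real number; this assignment $g \mapsto \log(\text{slope})$ is a homomorphism from the germ group into the additive group $(\mathbb{R},+)$, because derivatives of germs multiply under composition. Restricting to a fixed ray direction $v$, I get a homomorphism $\rho_v : G \to \mathbb{R}$. Since $H_1(G;\mathbb{R}) = \mathrm{Hom}(G,\mathbb{R}) = 0$, every such homomorphism is trivial, so each $g$ has slope $1$ along every ray at $p$. The hard part — and where I expect the real obstacle to be — is upgrading "derivative $1$ at the cone point along each ray" to "the identity on an honest neighborhood," because a PL germ with slope $1$ is not necessarily the identity: it can have further linear breakpoints away from $0$. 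I would handle this by exploiting that there are only finitely many generators, hence finitely many breakpoints accumulating at $p$; after passing to a small enough cone, the only breakpoint is at $p$ itself, so slope $1$ forces the generators (and hence all of $G$) to be the identity on that cone. This makes $\mathrm{Fix}(G)$ open at $p$.

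Finally I would remove the apparent reliance on the special point $p$. The openness argument above used only that $G$ fixes the point in question and acts trivially on its tangent sphere. For a general fixed point $q$ in the interior of the set we are building, the trivial action on $R_q$ is inherited: once $G$ is the identity on a neighborhood of $p$, it is the identity on the tangent spheres at all points of that neighborhood, and this property spreads along with the fixed-point set. Thus the set
$$
F = \{\, q \in M : G \text{ fixes a neighborhood of } q \,\}
$$
is open by construction, nonempty by the argument at $p$, and closed because its complement is where some generator is nontrivially supported, a set that is open by continuity of the PL action. By connectedness of $M$, $F = M$, so the action is trivial. One technical point to check is the boundary and the non-orientable case, where the slope homomorphism into $\mathbb{R}$ must be replaced by an appropriate target (a sign together with a slope, i.e. a map into $\mathbb{R} \rtimes \{\pm 1\}$); but hypothesis (ii) still kills the $\mathbb{R}$-part, and the finite $\{\pm 1\}$-part is absorbed after passing to a finite-index subgroup, so the conclusion stands.
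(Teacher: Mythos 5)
Your local argument at the given fixed point $p$ is essentially the paper's: along each $G$-invariant ray the slope of the first linear piece gives a homomorphism $G\to(\mathbb{R},+)$, which vanishes because $H_{1}(G;\mathbb{R})=0$, and since each generator is affine on every simplex of a common triangulation of the star of $p$, slope $1$ already forces the identity on that whole star. The genuine gap is in your closedness step. The set $F=\{q\in M: G \text{ fixes a neighborhood of } q\}$ is the interior of $\mathrm{Fix}(G)$, so its complement is the \emph{closure} of $\bigcup_{g}\{x:g(x)\neq x\}$, not that union itself; the closure of an open set is closed, not open, so your argument only re-proves that $F$ is open. The whole content of the theorem sits at a boundary point $q$ of $F$: there $G$ fixes $q$ and acts trivially on the rays of $R_{q}$ pointing into $F$, but nothing you have said controls the rays pointing out of $F$, so you cannot invoke hypothesis (i) at $q$ and restart the openness argument. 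Your remark that ``the trivial action on $R_{q}$ is inherited'' is true only for $q$ in the interior of $F$, where it is vacuous.

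The paper closes exactly this hole with Lemma \ref{cone}: taking $L=\overline{M\setminus \mathrm{St}_{p}}$, whose boundary (the link of $p$) is already fixed pointwise, the directional derivatives along the fixed faces through a vertex $q\in\partial L$ give homomorphisms killing the commutator subgroup near $q$, so the induced action on $R_{q}$ factors through the finite group $H_{1}(G;\mathbb{Z})$; since the group of homeomorphisms of a disk fixing the boundary is torsion-free, $G$ acts trivially on all of $R_{q}$, and the openness argument can be rerun at $q$. Some such device for crossing the frontier of $\mathrm{Fix}(G)$ is indispensable; without it your open-and-closed scheme does not close up. (A smaller point: your slope argument only gives the identity on the first linear piece of each ray, which suffices for openness, whereas the paper's Lemma \ref{ray} extends triviality to the entire ray by a ``no largest $a$'' iteration; this difference is cosmetic once the boundary issue is repaired.)
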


For group actions on $3$-manifolds, we obtain the following.

\begin{theorem}
\label{three}Let $\Gamma $ be a $2$-big lattice and $M$ be a $3$-manifold
whose boundary has nonzero Euler characteristic. Then any PL action of $%
\Gamma $ on $M$ factors through a finite group action.
\end{theorem}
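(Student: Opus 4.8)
The plan is to run the familiar two-step strategy (first a global fixed point, then triviality on a finite-index subgroup), using the surface theorem (Theorem \ref{main}) twice and closing with the PL stability theorem (Theorem \ref{rt}). \textbf{Step 1 (a fixed point on the boundary).} Since PL homeomorphisms preserve $\partial M$, the action of $\Gamma$ restricts to a PL action on the closed surface $\partial M$. As $\Gamma$ permutes the finitely many components of $\partial M$, I would first pass to a finite-index subgroup $\Gamma_1$ preserving each component setwise; here one checks that a finite-index subgroup of a $2$-big lattice is again $2$-big (intersecting a big subcentralizer with $\Gamma_1$ gives a finite-index, hence still big, subgroup). Because $\chi(\partial M)\neq 0$, some component $S$ satisfies $\chi(S)\neq 0$, so $S$ is neither the torus nor the Klein bottle. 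Applying Theorem \ref{main} to the restricted action of $\Gamma_1$ on $S$ yields a finite-index subgroup $\Gamma_2\leq\Gamma_1$ acting trivially on $S$; fixing any $p\in S$ gives a global fixed point, and in fact $\Gamma_2$ fixes $S$ pointwise.

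\textbf{Step 2 (trivializing the tangent sphere, then concluding).} To apply Theorem \ref{rt} at $p$, I must verify its two hypotheses for a suitable finite-index subgroup. Hypothesis (ii) is automatic: by the Margulis normal subgroup theorem every finite-index subgroup of a $2$-big (hence irreducible, higher-rank) lattice has finite abelianization, so $H_1(\cdot\,;\mathbb{R})=0$, and such groups are finitely generated. Hypothesis (i) is the heart of the matter. Since $p\in\partial M$, the tangent sphere $R_p$ is a compact PL surface with boundary, namely a $2$-disk, which is again neither the torus nor the Klein bottle; the PL action of $\Gamma_2$ on $M$ induces a PL action on the link $R_p$. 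Applying Theorem \ref{main} a second time, now to the action of $\Gamma_2$ on the surface $R_p$, produces a finite-index subgroup $\Gamma_3\leq\Gamma_2$ acting trivially on $R_p$. (Triviality on $S$ already forces the boundary circle $\partial R_p$ of directions tangent to $S$ to be fixed pointwise; Theorem \ref{main} disposes of the remaining transverse directions.) Now $\Gamma_3$ is finitely generated, fixes $p$, acts trivially on $R_p$, and has $H_1(\Gamma_3;\mathbb{R})=0$, so Theorem \ref{rt} forces $\Gamma_3$ to act trivially on all of $M$ (for $M$ connected; otherwise one first passes to the subgroup preserving each component and argues componentwise). Hence the kernel of $\Gamma\to\mathrm{Homeo}_{\mathrm{PL}}(M)$ contains the finite-index subgroup $\Gamma_3$, the image is finite, and the action factors through a finite group.

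I expect the main obstacle to be verifying hypothesis (i) of Theorem \ref{rt}, the triviality of the action on the tangent sphere $R_p$. Fixing the boundary surface $S$ pointwise controls only the directions tangent to $S$, that is $\partial R_p$, and says nothing a priori about the transverse directions pointing into the interior of $M$. The decisive maneuver is to recognize $R_p$ as a $2$-dimensional PL surface on which $\Gamma_2$ acts by PL homeomorphisms, thereby reducing the tangent-sphere condition to a second invocation of the surface theorem rather than a fresh three-dimensional argument. The remaining technical points to confirm along the way are that both bigness and finiteness of the abelianization survive each passage to a finite-index subgroup, and that the link of a boundary point of a PL $3$-manifold is indeed a $2$-disk under Definition \ref{ts}, so that Theorem \ref{main} genuinely applies to it.
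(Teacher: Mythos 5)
Your proposal is correct and follows essentially the same route as the paper: restrict to a boundary component $\Sigma$ with $\chi(\Sigma)\neq 0$, apply Theorem \ref{main} to obtain a finite-index subgroup fixing $\Sigma$ pointwise, trivialize the induced action on the tangent disk $R_p$ at a point $p\in\Sigma$, and conclude with Theorem \ref{rt}. The only divergence is the middle step: where you reapply Theorem \ref{main} to the $2$-disk $R_p$, the paper instead notes that $\partial R_p$ is already fixed pointwise (because the action on $\Sigma$ is trivial) and invokes Lemma \ref{ray} on the transverse rays; both routes are legitimate within the paper's toolkit.
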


\section{$k$-big lattices}

Recall the definition of $k$-big lattices from the previous section. The
following are some typical examples of $k$-big lattices (cf. \cite{fs},
Section 2):

\begin{itemize}
\item Any finite-index subgroup of $\mathrm{SL}_{n}(\mathbb{Z})$ is $k$-big
if $n\geq 2k+1.$

\item Let $\Gamma $ be a lattice in a semisimple Lie group whose root system
is not $D_{4}.$ $\Gamma $ is $k$-big if $\mathbb{Q}$-rank of $\Gamma $ is at
least $2k.$

\item Let $K$ be a finite real extension of $\mathbb{Q}$ and $\Phi $ be a
nondegenerate quadratic form of type $(p,q)$ over $K.$ If $\Phi $ is
diagonal and $(p,q)\geq (3k+3,2k+2),$ the group of $K$-integral unimodular
matrices preserving $\Phi $ is $k$-big.
\end{itemize}

A group is \emph{almost simple} if every normal subgroup is either finite
and central, or is of finite index. The Margulis Finiteness Theorem (cf. 
\cite{zi1}, Theorem 8.1) implies that a $k$-big lattice $\Gamma $ is almost
simple. Therefore, the normal subgroup of any noncentral element in $\Gamma $
is of finite index. We will use such fact several times in later arguments.

\section{A PL version of Reeb-Thurston's stability}

The following Reeb-Thurston's stability (cf. \cite{Th}) says that for a
smooth group action if the group $G$ acts trivially on the tangent space of
a global fixed point and there is no nontrivial homomorphism from $G$ to the
real number $\mathbb{R}$, then the group action is trivial.

\begin{lemma}[Reeb-Thurston Stability Theorem]
Suppose that the group $G$ is finitely generated and acts by $C^{1}$
diffeomorphisms on a connected manifold $M$, with a fixed point $p$. If

\begin{enumerate}
\item[(i)] $G$ acts trivially on the tangent space $T_{p}(M)$, and

\item[(ii)] $H_{1}(G;\mathbb{R)}=0$,

then the action is trivial (i.e., every point of $M$ is fixed by every
element of $G$).
\end{enumerate}
\end{lemma}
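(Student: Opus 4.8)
The plan is to derive the global conclusion from a purely local statement about the germ of the action at a fixed point, and to prove that local statement by Thurston's rescaling argument, which manufactures a nonzero homomorphism $G\to\mathbb{R}$ out of any nontrivial germ. First I would record the local claim: if a finitely generated group $H$ acts by $C^{1}$ diffeomorphisms on a neighborhood of a fixed point $q$ with $Dh(q)=\mathrm{id}$ for all $h\in H$ and with $\mathrm{Hom}(H,\mathbb{R})=0$, then $H$ fixes a whole neighborhood of $q$. Granting this, I would run an open--closed argument on $\mathrm{Fix}(G)\subseteq M$. Let $U$ be the interior of $\mathrm{Fix}(G)$. By the local claim applied at $p$ (hypotheses (i) and (ii)), $p\in U$, so $U\neq\varnothing$, and $U$ is open by definition. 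For closedness, take $q\in\overline{U}$: points of $U$ accumulate at $q$, so by continuity every $g\in G$ fixes $q$, and since $Dg\equiv\mathrm{id}$ on $U$ and $g$ is $C^{1}$, continuity of the derivative forces $Dg(q)=\mathrm{id}$. Thus the local hypotheses hold at $q$, the local claim gives $q\in U$, and $U$ is closed. Connectedness of $M$ then yields $U=M$, i.e. the action is trivial.

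To prove the local claim I would argue by contradiction, assuming the germ is nontrivial, and build a nonzero element of $\mathrm{Hom}(H,\mathbb{R})$. Fix local coordinates with $q=0$ and a finite symmetric generating set $S$, and for $g\in H$ write $f_{g}(x)=x+\phi_{g}(x)$, so that $\phi_{g}(0)=0$ and, by the derivative hypothesis, $D\phi_{g}(0)=0$; in particular $\sup_{|x|\le r}\lVert D\phi_{g}(x)\rVert\to 0$ as $r\to 0$. Nontriviality of the germ means some generator moves points arbitrarily near $0$, so I can choose $x_{j}\to 0$, $x_{j}\neq 0$, with $\varepsilon_{j}:=\max_{g\in S}|\phi_{g}(x_{j})|>0$. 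Setting $\rho_{g}^{(j)}=\phi_{g}(x_{j})/\varepsilon_{j}$, the normalization gives $\max_{g\in S}|\rho_{g}^{(j)}|=1$, so after passing to a subsequence I may assume $\rho_{g}^{(j)}\to\rho_{g}\in\mathbb{R}^{n}$ for every $g\in S$, with $\max_{g\in S}|\rho_{g}|=1$.

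The homomorphism property comes from the cocycle identity $\phi_{gh}(x)=\phi_{h}(x)+\phi_{g}(f_{h}(x))$. Since $f_{h}(x_{j})-x_{j}=\phi_{h}(x_{j})$ has norm at most $\varepsilon_{j}$, the mean value theorem together with $\sup_{\text{near }0}\lVert D\phi_{g}\rVert\to 0$ gives $\lvert\phi_{g}(f_{h}(x_{j}))-\phi_{g}(x_{j})\rvert=o(\varepsilon_{j})$; dividing by $\varepsilon_{j}$ and letting $j\to\infty$ yields $\rho_{gh}=\rho_{g}+\rho_{h}$. Applying this finitely many times along any fixed word shows that $\rho_{g}=\lim_{j}\phi_{g}(x_{j})/\varepsilon_{j}$ exists for every $g\in H$ and that $g\mapsto\rho_{g}$ is a homomorphism $H\to(\mathbb{R}^{n},+)$, nonzero because $\max_{g\in S}|\rho_{g}|=1$. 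Composing with any linear functional $\ell\colon\mathbb{R}^{n}\to\mathbb{R}$ with $\ell\circ\rho\neq 0$ produces a nonzero homomorphism $H\to\mathbb{R}$, i.e. $\mathrm{Hom}(H,\mathbb{R})\neq 0$, contradicting the hypothesis. Hence the germ is trivial, which completes the local claim and hence the theorem.

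The hard part is the analytic core of the middle two paragraphs: one must verify that the rescaled displacements stay bounded and that the cross-terms are genuinely $o(\varepsilon_{j})$, which is exactly where the $C^{1}$ hypothesis ($D\phi_{g}(0)=0$ with $D\phi_{g}$ continuous) is indispensable. A merely continuous or Lipschitz action would not force the cross-terms to vanish in the limit, and this is precisely why the statement fails in the $C^{0}$ or PL category and must be replaced by the tangent-sphere formulation of Theorem \ref{rt}.
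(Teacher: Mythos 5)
The paper does not prove this lemma at all; it is quoted as a known result with a citation to Thurston \cite{Th}, and only its PL analogue (Theorem \ref{rt}) is proved in the text. Your argument is a correct reconstruction of Thurston's original proof --- the local germ-triviality claim established by contradiction via the normalized displacements $\phi_{g}(x_{j})/\varepsilon_{j}$, the cocycle identity $\phi_{gh}(x)=\phi_{h}(x)+\phi_{g}(f_{h}(x))$, and the resulting nonzero homomorphism to $(\mathbb{R}^{n},+)$, followed by the open--closed argument on $\mathrm{Fix}(G)$ --- so it matches the intended source.
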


This is a useful lemma to tell when a group action is (globally) trivial in
terms of local information of the action.

In this section, we will prove a PL version of the Reeb-Thurston stability.
Let's recall some basic facts on PL homeomorphisms. Let $M^{n}$ be a
manifold (with or without boundary) embedded into some Euclidean space. A
homeomorphism $f$ of $M$ is piecewise linear (or PL) if there is some
triangulation of $M$ into finitely many simplices such that $f$ is affine
linear when restricted to each simplex in the triangulation. The composition
of two PL homeomorphisms is again a PL homeomorphism by subdivisions of
triangulations. Let $\mathrm{Homeo}_{\mathrm{PL}}(M)$ be the set of all PL
homeomorphisms of $M.$ A group $G$ action on $M$ by PL homeomorphisms means
a group homomorphism $G\rightarrow \mathrm{Homeo}_{\mathrm{PL}}(M).$

Since there is no standard concepts of tangent spaces for PL manifolds, we
define a notion of tangent spheres (for our purpose) as an analogy for
tangent spaces, as follows.

\begin{definition}
\label{ts}Let $M^{n}$ be a manifold with a triangulation $\Sigma .$ Suppose
that $p$ is a vertex in $\Sigma .$ The tangent sphere $R_{\Sigma }$ at $p$
is the set of all rays with root $p$ in the star of $p.$
\end{definition}

The topology of tangent spheres is described in the following lemma. Note
that tangent spheres are not always spheres.

\begin{lemma}
\label{sph}Define a topology on the tangent sphere $R_{\Sigma }$ as the one
induced from polar coordinates. Then when $p\in M^{n}\backslash \partial
M^{n},$ $R_{\Sigma }$ is homeomorphic to $S^{n-1},$ the sphere of dimension $%
n-1;$ when $p\in \partial M,$ $R_{\Sigma }$ is homeomorphic to $D^{n-1},$ a
disk in $S^{n-1}$ which is bounded by a finite number of hyperplanes passing
through the origin$.$
\end{lemma}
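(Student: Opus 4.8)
The plan is to reduce the statement to the standard description of the link of a vertex in a combinatorial manifold. First I would fix the linear structure: since $\Sigma$ triangulates $M^{n}$, every simplex $\sigma$ of $\Sigma$ containing $p$ is linearly embedded, and the closed star $\overline{\mathrm{St}}(p)$ is star-shaped with respect to $p$ (if $q\in\overline{\mathrm{St}}(p)$ lies in a simplex $\sigma\ni p$, then the whole segment $[p,q]$ lies in $\sigma$). Consequently $\overline{\mathrm{St}}(p)$ is exactly the cone $p*\mathrm{lk}(p)$ on the link $\mathrm{lk}(p,\Sigma)$, and every ray with root $p$ in the star meets $\mathrm{lk}(p)$ in exactly one point.

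Next I would make the homeomorphism $R_{\Sigma}\cong|\mathrm{lk}(p)|$ explicit. Writing a simplex through $p$ as $\sigma=[p,v_{0},\dots,v_{k}]$ and putting $u_{i}=v_{i}-p$, the directions of rays from $p$ into $\sigma$ are the normalized vectors $\sum s_{i}u_{i}/\Vert\sum s_{i}u_{i}\Vert$ with $s_{i}\geq 0$, $\sum s_{i}=1$. Because $p$ is a genuine vertex, the $u_{i}$ are linearly independent, so radial projection $x\mapsto (x-p)/\Vert x-p\Vert$ carries the face $[v_{0},\dots,v_{k}]$ of $\mathrm{lk}(p)$ homeomorphically onto this ``spherical simplex'', and these spherical simplices glue along shared faces precisely as the corresponding faces of $\mathrm{lk}(p)$ do. Hence the map sending a point of $\mathrm{lk}(p)$ to the ray from $p$ through it is a continuous bijection with continuous inverse, and it identifies the polar-coordinate topology on $R_{\Sigma}$ with the topology of $|\mathrm{lk}(p)|$. (Equivalently, one may intersect the star with a small metric sphere about $p$ and use radial projection.)

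It then remains to identify the link. By the standard theory of PL (combinatorial) manifolds, the link of an interior vertex of a triangulated $n$-manifold is a combinatorial $(n-1)$-sphere, and the link of a boundary vertex is a combinatorial $(n-1)$-ball; a combinatorial sphere (resp. ball) is PL-homeomorphic, hence homeomorphic, to $S^{n-1}$ (resp. $D^{n-1}$). This yields $R_{\Sigma}\cong S^{n-1}$ when $p\in M^{n}\setminus\partial M^{n}$ and $R_{\Sigma}\cong D^{n-1}$ when $p\in\partial M$. For the boundary description, I would note that $\partial(\mathrm{lk}(p))=\mathrm{lk}(p,\partial M)$, and that each top-dimensional simplex $[p,w_{1},\dots,w_{n-1}]$ of the triangulated boundary contributes directions lying in the intersection of the unit sphere with the hyperplane through the origin spanned by $w_{1}-p,\dots,w_{n-1}-p$; these finitely many hyperplanes bound the disk $R_{\Sigma}$, giving the claimed picture.

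The star-shapedness and the explicit homeomorphism $R_{\Sigma}\cong|\mathrm{lk}(p)|$ are the routine parts; the real content is the identification of the link with the standard sphere or ball, which is a PL-manifold input rather than something one reads off the definition. I expect this to be the main point to get right, and it is exactly where the hypothesis that $M$ be a PL manifold (so that $\Sigma$ is a combinatorial triangulation and links are spheres or balls) must be used: for arbitrary topological triangulations in high dimensions the link can fail to be a sphere, by the double suspension phenomenon. In the dimensions relevant to this paper ($n=2,3$) this input is elementary, since the link of a vertex in a surface is a circle or an arc, and in a $3$-manifold it is a triangulated $2$-sphere or $2$-disk, so no such subtlety arises.
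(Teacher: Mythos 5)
Your argument is correct and is essentially an expanded version of the paper's proof, which consists of the single remark that one may ``choose a unit vector in each ray''---exactly your radial projection of the star onto the link. The only thing you add is to make explicit the genuine input that the paper treats as obvious, namely that the link of an interior (resp.\ boundary) vertex of a combinatorial triangulation is a PL $(n-1)$-sphere (resp.\ ball); as you note, this is elementary in the dimensions $n\leq 3$ actually used in the paper.
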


\begin{proof}
This is obvious by noting that we could choose a unit vector in each ray.
\end{proof}

We consider group actions on tangent spheres. Let $G$ act on $M^{n}$ by PL
homeomorphisms. Suppose that $p$ is a global fixed point. For each element $%
g\in G,$ there is a triangulation $\Sigma _{g}$ (with $p$ as a vertex) on
which $g$ acts simplicially. Without loss of generality, we can fix a chart
of $p$ and view $p$ as a point in a Euclidean space. Since the action of $g$
is PL, the tangent sphere $R_{\Sigma _{g}}$is invariant under this action.
If $\Sigma _{g^{\prime }}$ is a subdivision of $\Sigma _{g},$ there is a
bijection $i_{g,g^{\prime }}:R_{\Sigma _{g}}\rightarrow R_{\Sigma
_{g^{\prime }}}$ by restrictions$.$ The map $i_{g,g^{\prime }}$ is a
homeomorphism.

Suppose that $B$ is a symmetric generating set ($g\in B$ implies $g^{-1}\in
B $) of a finitely generated group $G.$ We also assume that $B$ contains the
identity $1$. For each positive integer $i$, denote by $B^{i}$ the set of
products of $i$ elements in $B$ and $\Sigma _{B^{i}}$ the triangulation
generated by all $\Sigma _{g}$ for $g\in B^{i}.$ We see that $\Sigma
_{B^{i}} $ is a subdivision of $\Sigma _{B^{j}}$ for any $j<i.$ Similarly,
we can define $R_{\Sigma _{B^{i}}}.$ Denote by $\Sigma _{G}$ the direct
limit $\lim\limits_{i\rightarrow +\infty }\Sigma _{B^{i}}$ and $R_{p}$ the
direct limit of $\lim\limits_{i\rightarrow +\infty }R_{\Sigma _{B^{i}}}.$ We
call $R_{p}$ the tangent sphere of $M$ at $p$ with respect to the group
action of $G$ and a symmetric generating set $B.$

\begin{lemma}
Let a finitely generated group $G$ act on a manifold $M$ by PL
homeomorphisms with a global fixed point $p.$ There is an induced action of $%
G$ on the tangent sphere $R_{p}$ by PL homeomorphisms.
\end{lemma}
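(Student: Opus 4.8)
The plan is to define, for each $g\in G$, a self-map $g_{\ast}$ of the set of rays $R_{p}$ by ``differentiating $g$ at the fixed point $p$'', to verify that $g\mapsto g_{\ast}$ is a homomorphism, and finally to check that each $g_{\ast}$ is a PL homeomorphism for the direct-limit structure on $R_{p}$.

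First I would pin down the local structure of $g$ at $p$. Since $g$ is PL and $g(p)=p$, on each simplex $\sigma$ of the star of $p$ in $\Sigma _{g}$ the map $g$ is affine, say $g(x)=A_{\sigma }x+b_{\sigma }$; the condition $g(p)=p$ forces $b_{\sigma }=(I-A_{\sigma })p$, so $g(x)-p=A_{\sigma }(x-p)$ and $g$ is linear relative to $p$ on $\sigma$. Hence $g$ carries each ray based at $p$ in the star to a ray based at $p$, and the induced assignment on directions, $v\mapsto A_{\sigma }v/\Vert A_{\sigma }v\Vert $ for $v$ in the cone of $\sigma$, is well defined and continuous, agreeing on shared faces because $g$ is a homeomorphism. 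This description depends only on $g$ (not on the chosen triangulation), and it produces $g_{\ast }\colon R_{p}\rightarrow R_{p}$ on the underlying set $S^{n-1}$ (resp. $D^{n-1}$ when $p\in \partial M$, by Lemma \ref{sph}); continuity in the polar-coordinate topology, applied also to $g^{-1}$, shows $g_{\ast }$ is a homeomorphism with inverse $(g^{-1})_{\ast }$.

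Next I would establish the homomorphism property. For a ray $r$ near $p$, both $h(r)$ and $g(h(r))$ are again rays, so $(gh)_{\ast }=g_{\ast }\circ h_{\ast }$ and $1_{\ast }=\mathrm{id}$ with the evident convention. Because the underlying set of directions does not depend on the triangulation and the transition homeomorphisms $i_{g,g^{\prime }}$ merely record ``the same direction'', each $g_{\ast }$ is compatible with the direct system and descends to a self-map of the direct limit $R_{p}$, giving a homomorphism $G\rightarrow \mathrm{Homeo}(R_{p})$.

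The crux is to show that each $g_{\ast }$ is PL for the limit structure. Here I would use that $g$ acts simplicially on $\Sigma _{g}$: radial projection carries the simplices of the star of $p$ to a triangulation of $R_{\Sigma _{g}}$, and on each such spherical simplex $g_{\ast }$ is the normalization $v\mapsto A_{\sigma }v/\Vert A_{\sigma }v\Vert $ of a linear map, hence simplicial, so $g_{\ast }$ is a PL self-homeomorphism of $R_{\Sigma _{g}}$. For $g\in B^{k}$ the triangulation $\Sigma _{B^{k}}$ refines $\Sigma _{g}$, and a map that is linear on the simplices of a coarse triangulation is automatically linear on the simplices of any refinement; since the limit PL structure on $R_{p}$ refines every $R_{\Sigma _{B^{i}}}$, it follows that $g_{\ast }$ is PL on $R_{p}$. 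Alternatively, writing $g=b_{1}\cdots b_{k}$ with $b_{j}\in B$ one composes the PL maps $(b_{j})_{\ast }$. The step requiring the most care is precisely this last one: one must check that passing from the coarse, $g$-adapted triangulation $\Sigma _{g}$ (on which $g_{\ast }$ is visibly simplicial) to the finer limit structure preserves PL-ness, i.e. that $g_{\ast }$ need no longer be simplicial but remains piecewise linear, and that the boundary case $R_{p}\cong D^{n-1}$ is handled by the same linear-on-cones argument.
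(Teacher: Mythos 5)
Your proposal is correct and follows essentially the same route as the paper: the paper's proof simply asserts that the bijections $R_{\Sigma_{B^{i}}}\rightarrow R_{\Sigma_{B^{i+1}}}$ are compatible with the action and that each element of $B^{i}$ acts piecewise linearly on $R_{\Sigma_{B^{i}}}\cong R_{p}$ "by construction," while you supply the details (linearity of $g$ relative to $p$ on each cone of the star, the induced normalized linear map on directions, compatibility with refinement). The only minor caution is that one should rely on $g$ being affine on each simplex of $\Sigma_{g}$ rather than on $g$ literally preserving the triangulation; this is all your argument actually uses, and it suffices for $g_{\ast}$ to be PL.
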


\begin{proof}
It is straightforward that the bijection $R_{\Sigma _{B^{i}}}\rightarrow
R_{\Sigma _{B^{i+1}}}$ is compactible with actions of $B^{i}.$ Therefore,
there is a group action of $G$ on $R_{p}$ induced from the action on $M.$
For each element $g\in G,$ $g\in B^{i}$ for some integer $i.$ By
construction, each element of $B^{i}$ acts on $R_{\Sigma _{B^{i}}}\cong
R_{p} $ piecewise linearly.
\end{proof}

\begin{remark}
It is not hard to see that the definition of $R_{p}$ is independent of
choices of generating sets$.$
\end{remark}

Note that $R_{p}$ is homeomorphic to $S^{n-1}$ when $p\notin \partial M$ or $%
D^{n-1}$ when $p\in \partial M.$

In order to prove Theorem \ref{rt}, we need the following two lemmas.

\begin{lemma}
\label{ray}Let $G$ be a finitely generated group and $H_{1}(G;\mathbb{R})=0.$
Then any action of $G$ on $[0,1)$ (resp. $[0,1]$) by PL homeomorphisms
fixing $0$ (resp. $\{0,1\}$) is trivial.
\end{lemma}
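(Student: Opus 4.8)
The plan is to convert the hypothesis $H_{1}(G;\mathbb{R})=0$ into its only usable consequence here, namely that every homomorphism $G\to\mathbb{R}$ is trivial (since $\mathrm{Hom}(G,\mathbb{R})\cong\mathrm{Hom}(H_{1}(G;\mathbb{R}),\mathbb{R})$), and then to imitate the proof of Thurston stability with the germ of the derivative replaced by the germ of the slope of a PL map at a fixed point. I would prove the $[0,1)$ statement first and deduce the $[0,1]$ statement from it at the end.

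First I would set up the PL analogue of the ``derivative at a fixed point'' homomorphism. Any PL homeomorphism $f$ of $[0,1)$ fixing $0$ is orientation-preserving, and because its triangulation has finitely many pieces, the piece containing $0$ shows that $f(x)=\lambda_{f}\,x$ for all $x$ in some initial interval, with $\lambda_{f}>0$. Composition multiplies these germ-slopes, so $g\mapsto\lambda_{g}$ is a homomorphism $G\to(\mathbb{R}_{>0},\times)$ and hence $g\mapsto\log\lambda_{g}$ is a homomorphism $G\to\mathbb{R}$. By the hypothesis this homomorphism is trivial, so $\lambda_{g}=1$ for every $g$, which forces each $g$ to be the identity on some initial interval $[0,\varepsilon_{g})$.

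Next I would globalize using finite generation and a supremum argument. Fixing a finite generating set $B$ and taking $\varepsilon=\min_{g\in B}\varepsilon_{g}>0$, every generator (hence every element of $G$, since the identity germs compose) is the identity on $[0,\varepsilon)$; thus the common fixed set $F=\bigcap_{g\in G}\mathrm{Fix}(g)$ contains a neighbourhood of $0$. Let $c=\sup\{t:[0,t]\subseteq F\}$. Since each $\mathrm{Fix}(g)$ is closed, $F$ is closed, so if $c<1$ then $[0,c]\subseteq F$ and in particular $c$ is a global fixed point. Applying the germ-slope construction at $c$ from the right gives a homomorphism $G\to\mathbb{R}$ recording $\log$ of the right-hand slope at $c$; it is again trivial, so each $g$ is the identity on some $[c,c+\eta_{g})$, and finite generation yields $[0,c+\eta)\subseteq F$ for some $\eta>0$, contradicting the definition of $c$. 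Hence $c=1$ and $F=[0,1)$, i.e. the action is trivial. For the $[0,1]$ case with $\{0,1\}$ fixed, restricting each $g$ to $[0,1)$ gives a PL action fixing $0$, which is trivial by the first part; continuity then gives triviality on all of $[0,1]$.

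The main obstacle is the second step: the slope-at-a-point homomorphism is only defined once the chosen base point is a global fixed point of $G$, so the argument must be redeployed at the new frontier point $c$, and the jump from ``each generator fixes a one-sided neighbourhood'' to ``all of $G$ fixes a common one-sided neighbourhood'' relies essentially on finite generation. Verifying that $c$ is indeed a common fixed point (via closedness of $F$) and that the right-hand germ-slope is genuinely multiplicative under composition at $c$ are the points requiring care; the rest is the routine observation that slope $1$ at a fixed point of a PL map means pointwise identity on a nontrivial segment.
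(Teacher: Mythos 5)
Your proof is correct and follows essentially the same route as the paper's: a slope-at-the-fixed-point homomorphism killed by the homological hypothesis, followed by a no-largest-$a$/supremum argument to propagate local triviality along the interval. The only (minor) difference is that you kill the entire slope homomorphism at once via $\mathrm{Hom}(G,\mathbb{R})=0$, whereas the paper first observes that commutators have trivial slope and then disposes of the remaining finite abelianized quotient using left-orderability (hence torsion-freeness) of groups acting faithfully on an interval.
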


\begin{proof}
Since $0$ is a global fixed point, we could define a map $G\rightarrow 
\mathbb{R}^{\ast }$ by%
\begin{equation*}
g\mapsto (dg)(0),
\end{equation*}%
the one-sided derivative of $g$ at $0.$ This is a group homomorphism.
Therefore, a commutator $h$ in $G$ will be a constant function in a
neighborhood $[0,a_{h}]$ of $0.$ Since $H_{1}(G;\mathbb{R})=0$ and $G$ is
finitely generated, $H_{1}(G;\mathbb{Z})$ is finite and the commutator
subgroup $[G,G]$ is finitely generated. Suppose that elements $%
h_{1},h_{2},\cdots ,h_{k}$ generate $[G,G]$ and they together with other
elements $s_{1},s_{2},\cdots ,s_{m}$ generate the whole group $G.$ There is
a triangulation of $[0,1)$ on which each element of $\{h_{1},h_{2},\cdots
,h_{k},s_{1},s_{2},\cdots ,s_{m}\}$ acts piecewise linearly. Let $a$ be the
minimum positive vertex in the $0$-skeleton of this triangulation. Without
loss of generality, assume that $\{h_{1},h_{2},\cdots
,h_{k},s_{1},s_{2},\cdots ,s_{m}\}$ is symmetric and contains the identity.
Then $[G,G]$ acts trivially on $[0,a].$ Since $H_{1}(G;\mathbb{Z})$ is
finite and a group acting effectively on $[0,a]$ must be left orderable, the
action of $\{s_{1},s_{2},\cdots ,s_{m}\}$ is also trivial on $[0,a].$
Therefore, the action of $G$ on $[0,a]$ is trivial. We claim that there is
no largest such $a<1,$ which means the action on $[0,1)$ of $G$ is trivial.
On the contrary, suppose that $a_{0}$ is the largest such $a.$ Then the
group action of $G$ on $[a,1)$ (resp. $[a,1]$) is piecewise linear with $a$
a global fixed point. Repeat previous argument, we could find a new $%
a^{\prime }>a$ such that $G$ acts trivially on $[0,a^{\prime }].$ This is a
contradiction by the assumption of $a.$
\end{proof}

The following lemma is a high-dimensional analogy of the previous lemma.

\begin{lemma}
\label{cone}Let $G$ be a finitely generated group and $H_{1}(G;\mathbb{R})=0$
and $L$ a polytope. Suppose that $G$ acts on $L$ by PL homeomorphisms with
its boundary fixed. Then the action of $G$ on $R_{p}$ is trivial for any
vertex $p\in \partial L$.
\end{lemma}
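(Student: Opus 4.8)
The plan is to induct on the dimension $n$ of the polytope $L$, running the induction jointly with Theorem \ref{rt}: I use Lemma \ref{ray} for the base case, deduce Lemma \ref{cone} in dimension $n$ from Theorem \ref{rt} in dimension $n-1$, and rely on the fact that the proof of Theorem \ref{rt} in each dimension appeals to Lemma \ref{cone} only in strictly lower dimensions. With that bookkeeping the combined induction is strictly decreasing, so no circularity arises. The conceptual bridge throughout is that near a fixed vertex every PL homeomorphism is a cone map (affine and fixing the vertex on each incident simplex, hence linear on rays), so ``$g$ acts trivially on the tangent sphere at the vertex'' is equivalent to ``$g$ fixes a neighborhood of the vertex pointwise''; this is what lets tangent-sphere information be iterated.

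First I would pin down the geometry of $R_p$. Since $p$ is a vertex of $\partial L$, Lemma \ref{sph} identifies $R_p$ with a disk $D^{n-1}$ whose boundary $\partial R_p\cong S^{n-2}$ consists exactly of those rays from $p$ that lie inside $\partial L$. Because $G$ fixes $\partial L$ pointwise, each such ray is fixed pointwise by every $g\in G$, so $G$ fixes $\partial R_p$ pointwise. Moreover, by the induced-action construction established above, $G$ acts on $R_p$ by PL homeomorphisms, so $R_p$ is itself an $(n-1)$-dimensional polytope carrying a PL $G$-action whose entire boundary is fixed.

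The inductive step then has two moves. First, apply Lemma \ref{cone} in dimension $n-1$ to the polytope $R_p$: for every vertex $q\in\partial R_p$ the induced action of $G$ on the tangent sphere of $R_p$ at $q$ is trivial. Second, note that such a $q$ is a global fixed point of the PL action of $G$ on the connected $(n-1)$-manifold $R_p$, that $G$ acts trivially on its tangent sphere, and that $H_1(G;\mathbb{R})=0$; Theorem \ref{rt} in dimension $n-1$ then yields that $G$ acts trivially on all of $R_p$, which is the claim. For the base case $n=2$, the disk $R_p$ is an arc homeomorphic to $[0,1]$ whose two endpoints are the edge-directions of $\partial L$ at $p$ and hence are fixed, so Lemma \ref{ray} (the $[0,1]$ case) finishes it; the case $n=1$ is trivial since $R_p$ is a single ray.

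The main obstacle is organizing the joint induction so that the hypotheses of Theorem \ref{rt} are legitimately available one dimension down — in particular confirming that the fixed set $\partial R_p$ really is the \emph{full} boundary sphere of $R_p$, so that a fixed boundary vertex $q$ exists, and checking that the iterated tangent sphere (the tangent sphere of $R_p$ at $q$) matches the object produced by the cone structure of $L$ near $p$. If the downstream proof of Theorem \ref{rt} turns out to invoke Lemma \ref{cone} in the \emph{same} dimension, the cleaner route is instead a self-contained induction mirroring Lemma \ref{ray}: reduce to showing $G$ fixes a neighborhood of $p$, fix neighborhoods of the lower-dimensional faces of $\partial L$ by the inductive hypothesis, and then propagate triviality inward by a ``no largest fixed neighborhood'' argument along rays. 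Either way the delicate point is this inward propagation from the fixed boundary into the interior of $R_p$.
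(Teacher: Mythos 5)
Your argument is correct in outline but it is a genuinely different proof from the paper's. The paper mimics Thurston's original stability argument at the level of the vertex $p$: for each boundary face $F_{i}$ containing $p$ it forms the directional-derivative map $g\mapsto (dg)|_{F_{i}}(p)$, observes that this is a homomorphism into an abelian group because $G$ fixes $F_{i}$ pointwise, concludes that commutators act trivially near $p$ so that the action on $R_{p}$ factors through the finite group $H_{1}(G;\mathbb{Z})$, and then quotes the fact that the group of homeomorphisms of a disk fixing its boundary is torsion-free to kill the finite quotient. You instead observe that $R_{p}$ is itself a disk of one lower dimension whose entire boundary is fixed by $G$, and run the statement and Theorem \ref{rt} together in a downward induction on dimension, bottoming out in Lemma \ref{ray}. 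Two remarks. First, your stated premise that the proof of Theorem \ref{rt} invokes Lemma \ref{cone} only in strictly lower dimensions is false for this paper: the proof of Theorem \ref{rt} on an $n$-manifold applies Lemma \ref{cone} to the $n$-dimensional polytope $\overline{M\setminus \mathrm{St}_{p}}$. However, this does not break your induction, because your proof of the lemma in dimension $n$ consumes only the dimension-$(n-1)$ instances of both statements, so the ordering cone$(2)$, rt$(2)$, cone$(3)$, rt$(3)$, \dots\ is well-founded and your fallback ``self-contained'' variant is not actually needed. Second, what your route buys is the avoidance of the paper's two more delicate ingredients --- the multiplicative derivative homomorphism for PL maps in dimension above one, and the appeal to torsion-freeness of $\mathrm{Homeo}(D^{n-1},\partial D^{n-1})$ --- at the cost of having to restate and reorder Theorem \ref{rt} and Lemma \ref{cone} as a joint induction, including verifying that Theorem \ref{rt} applies at boundary fixed points of manifolds with boundary (which the paper uses implicitly elsewhere). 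The one point you should still write out carefully is the identification of $\partial R_{p}$ with the set of rays lying in $\partial L$, so that the fixed-boundary hypothesis genuinely passes to $R_{p}$; with Lemma \ref{sph} this is the standard cone structure of a manifold with boundary at a boundary vertex, and then your inductive step is sound.
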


\begin{proof}
The idea of the proof is similar to that of the previous one. Choose a
vertex $p$ of $L$ and consider the set of faces $\{F_{1},F_{2},\cdots
,F_{m}\}$ containing $p.$ For each such face $F_{i},$ we define a map $%
G\rightarrow \mathbb{R}^{n-1}\backslash \{0\}$ ($n$ is the dimension of $L$)
by%
\begin{equation*}
g\mapsto (dg)|_{F_{i}}(p),
\end{equation*}%
the directional derivative of $p$ along $F_{i}.$ Since $G$ acts trivially on 
$F_{i},$ this is a group homomorphism. Then every element in the commutator
subgroup will have image $(1,1,\ldots ,1).$ This implies that a commutator
acts trivially on a neighborhood of $p.$ Therefore, the action of $G$ on $%
R_{p}$ factors through $H_{1}(G;\mathbb{Z}),$ which is a finite abelian
group (since $H_{1}(G;\mathbb{R})=0$). However, any homeomorphism of a disk
with boundary fixed must be torsion-free (cf. \cite{sw}, p. 264). This
implies that $G$ acts trivially on $R_{p}.$
\end{proof}

\bigskip

In the following, we prove the PL version of Reeb-Thurston's stability.

\begin{proof}[Proof of Theorem \protect\ref{rt}]
Let $B$ be a finite symmetric generating set containing the identity of $G.$
Since $G$ acts trivially on the tangent sphere $R_{p},$ each ray in $%
R_{\Sigma _{B}}$ is invariant by the action of $B.$ Therefore, each element $%
g\in B$ maps a ray $r_{p}\in R_{\Sigma _{B}}$ to itself and so does each $%
g\in G$. Fix any ray $r_{p},$ which is homeomorphic to $[0,1).$ Then the
group $G$ acts piecewise linearly on $r_{p}.$ By Lemma \ref{ray}, $G$ acts
trivially on $r_{p}.$ Therefore, $G$ acts trivially on the star $\mathrm{St}%
_{p}$ of $p$ in the triangulation $\Sigma _{B}.$ Choose another vertex $%
p^{\prime }$ in the boundary of the star of $p$. By Lemma \ref{cone}, the
group action of $G$ on $(\overline{M\backslash \mathrm{St}_{p}})\cap
R_{p^{\prime }}$ is trivial and so is on $R_{p^{\prime }}.$ Repeating the
argument as above, this shows that the fixed point set $\mathrm{Fix}(G)$ is
both open and closed, which implies that $\mathrm{Fix}(G)$ is the whole
manifold $M.$
\end{proof}

\section{Actions on circles, surfaces and 3-manifolds}

We need a fixed-point result of Fuller \cite{Fu}:

\begin{lemma}
\label{fuller}Let $M$ be an orientable compact combinatorial manifold and $%
f:M\rightarrow M$ be a homeomorphism. If the Euler characteristic of $M$ is
not zero, then $f$ has a periodic point, i.e. $f^{k}(x)=x$ for some integer $%
k$ and some $x\in M.$
\end{lemma}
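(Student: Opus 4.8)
The plan is to deduce this from the Lefschetz--Hopf fixed point theorem, applied not to $f$ itself but to each of its iterates $f^{k}$. Since $M$ is a compact combinatorial manifold it is a finite polyhedron, so Lefschetz--Hopf applies: if the Lefschetz number $L(f^{k})=\sum_{i}(-1)^{i}\mathrm{tr}\big((f_{\ast})^{k}\mid H_{i}(M;\mathbb{Q})\big)$ is nonzero, then $f^{k}$ has a fixed point, which is exactly a periodic point of $f$. Thus it suffices to show that $L(f^{k})\neq 0$ for at least one $k\geq 1$, and the whole problem reduces to a statement about the eigenvalues of the maps induced by $f$ on rational homology.

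Concretely, I would let $\mu_{1},\dots,\mu_{r}$ be the distinct nonzero eigenvalues occurring among the operators $f_{\ast}$ on the spaces $H_{i}(M;\mathbb{Q})$; these are nonzero because $f$ is a homeomorphism, so each $f_{\ast}$ is invertible and $0$ is not an eigenvalue. Assigning to each eigenvalue the signed multiplicity $c_{j}=\sum_{i}(-1)^{i}m_{i,j}$, where $m_{i,j}$ is the multiplicity of $\mu_{j}$ in $H_{i}$, one obtains $L(f^{k})=\sum_{j=1}^{r}c_{j}\mu_{j}^{k}$ for every $k\geq 0$. The case $k=0$ recovers $L(\mathrm{id})=\sum_{j}c_{j}=\chi(M)$, which is nonzero by hypothesis. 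I would then package these numbers into the generating function $\sum_{k\geq 0}L(f^{k})z^{k}=\sum_{j=1}^{r}\frac{c_{j}}{1-\mu_{j}z}$, an identity of rational functions.

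The heart of the argument, and the step that carries the real content, is to rule out the possibility that $L(f^{k})=0$ for all $k\geq 1$. If that held, the generating function above would collapse to the constant $L(\mathrm{id})=\chi(M)$. But $\sum_{j}\frac{c_{j}}{1-\mu_{j}z}$ has a simple pole at $z=1/\mu_{j}$ whenever $c_{j}\neq 0$, and the points $1/\mu_{j}$ are distinct for distinct $\mu_{j}$, whereas a constant rational function has no poles; this forces $c_{j}=0$ for every $j$ and hence $\chi(M)=\sum_{j}c_{j}=0$, a contradiction. Therefore $L(f^{k})\neq 0$ for some $k\geq 1$, and Lefschetz--Hopf produces the desired periodic point. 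The only technical checks are that $M$ genuinely satisfies the hypotheses of Lefschetz--Hopf, which is immediate for a finite combinatorial manifold, and the linear independence of the partial fractions $1/(1-\mu_{j}z)$ at distinct poles, which is precisely the uniqueness-of-poles fact used above; note that orientability is not needed for this approach.
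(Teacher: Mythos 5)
Your argument is correct, but note that the paper does not prove this lemma at all: it is quoted verbatim as a known result of Fuller \cite{Fu}, so there is no internal proof to compare against. What you have written is a complete, self-contained proof, and it is essentially the classical argument (close in spirit to Fuller's original one): apply Lefschetz--Hopf to the iterates $f^{k}$, write $L(f^{k})=\sum_{j}c_{j}\mu_{j}^{k}$ with $\mu_{j}$ the distinct (nonzero, since $f_{\ast}$ is invertible) eigenvalues on rational homology and $c_{j}$ their signed multiplicities, observe $\sum_{j}c_{j}=L(\mathrm{id})=\chi(M)$, and rule out $L(f^{k})=0$ for all $k\geq 1$ via the linear independence of the functions $1/(1-\mu_{j}z)$ at distinct poles (equivalently, a Vandermonde argument on $k=1,\dots,r$). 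The only points worth making explicit are that the eigenvalue bookkeeping should be done over $\mathbb{C}$ so that algebraic multiplicities sum to $\dim H_{i}$, and that the Lefschetz--Hopf theorem applies because a compact combinatorial manifold is a finite polyhedron. Your remark that orientability is superfluous is also correct; it appears in the statement only because it is inherited from Fuller's formulation, and nothing in the lemma or in the paper's use of it requires it.
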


The following lemma gives a description of the fixed point set of a PL
homeomorphism.

\begin{lemma}
\label{char}Suppose that $M$ is a compact manifold. Let $f:M\rightarrow M$
be a PL homeomorphism with nonempty fixed point set $\mathrm{Fix}(f).$ Then

\begin{enumerate}
\item[(i)] $\mathrm{Fix}(f)$ is a finite simplicial complex;

\item[(ii)] there is a (not necessarily connected) closed manifold $%
N_{f}\subset \mathrm{Fix}(f)$ (of codimension at least $1$ if $f\neq \mathrm{%
id}_{M}$) such that for any PL homeomorphism $g:M\rightarrow M$ with $g(%
\mathrm{Fix}(f))=\mathrm{Fix}(f),$ we have $g(N_{f})=N_{f}.$
\end{enumerate}
\end{lemma}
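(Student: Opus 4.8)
The plan is to prove both parts of Lemma~\ref{char} by exploiting the simplicial structure of the fixed-point set of a PL homeomorphism. For part (i), the key observation is that since $f$ is PL, there is a triangulation $\Sigma$ of $M$ on which $f$ acts simplicially. For a point $x$ lying in the interior of a simplex $\sigma\in\Sigma$, being fixed by $f$ forces $f$ to map $\sigma$ to itself (as $f$ is affine-linear on $\sigma$ and preserves the barycentric structure), and the fixed set of an affine-linear self-map of a simplex is an affine subspace intersected with that simplex, hence a convex subpolytope. Thus $\mathrm{Fix}(f)$ is covered by finitely many such subpolytopes, one per simplex of $\Sigma$ that $f$ maps to itself; after a subdivision making these subpolytopes full subcomplexes, $\mathrm{Fix}(f)$ becomes a finite simplicial complex. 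First I would fix such a compatible triangulation and carry out this local analysis simplex-by-simplex.

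For part (ii) the plan is to extract a canonical manifold piece from the simplicial complex $\mathrm{Fix}(f)$ and show it is preserved by any $g$ normalizing $\mathrm{Fix}(f)$. I would define $N_f$ using intrinsic topological invariants of $\mathrm{Fix}(f)$, so that invariance under $g$ is automatic. Concretely, the natural candidate is the union of the top-dimensional strata where $\mathrm{Fix}(f)$ is locally a manifold, or more robustly, the set of points of $\mathrm{Fix}(f)$ admitting a Euclidean neighborhood of some fixed maximal dimension $d$, taking $N_f$ to be the closure of the pure $d$-dimensional manifold part (its singular set removed). Since $g\colon\mathrm{Fix}(f)\to\mathrm{Fix}(f)$ is a homeomorphism, it preserves local topological type at each point, hence preserves the locus of manifold points of each dimension; therefore $g(N_f)=N_f$. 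The codimension bound follows because if $f\neq\mathrm{id}_M$ then $\mathrm{Fix}(f)$ cannot contain an open subset of $M$: on any $n$-simplex of $\Sigma$ fixed pointwise, $f$ is the identity, and since $M$ is connected and $f$ is PL, fixing a top-dimensional region would propagate (via the overlaps of simplices) to force $f=\mathrm{id}_M$, contradicting $f\neq\mathrm{id}_M$.

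I expect the main obstacle to be part (ii), specifically making $N_f$ a genuine closed manifold rather than merely a manifold-with-boundary or a complex with lower-dimensional debris attached. The delicate point is that the locus of manifold points of dimension $d$ need not be closed, and its frontier may meet lower-dimensional strata; one must argue that the maximal-dimension manifold part is already closed, or pass to a carefully chosen sub-stratum that is. I would handle this by working with the canonical stratification of the simplicial complex $\mathrm{Fix}(f)$ by local homology: define $N_f$ as the top-dimensional stratum of points $x$ where the local homology $H_*(\mathrm{Fix}(f),\mathrm{Fix}(f)\setminus\{x\})$ is that of $\mathbb{R}^d$, and verify this stratum is a closed manifold. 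Because local homology is a topological invariant, any homeomorphism $g$ of $\mathrm{Fix}(f)$ must carry this stratum to itself, which is exactly the required equivariance. The verification that this stratum is closed and is a manifold is the technical heart, and I would reduce it to the combinatorics of the link structure in the triangulation.
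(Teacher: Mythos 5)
Part (i) of your proposal is essentially the paper's own argument: on a triangulation adapted to $f$ the set $\mathrm{Fix}(f)\cap\sigma$ is the solution set of an affine equation, hence a polytope, and finiteness of the triangulation of the compact $M$ gives a finite complex. That part is fine.

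Part (ii) contains two genuine gaps. First, your justification of the codimension bound rests on the claim that if $f$ fixes a top-dimensional simplex pointwise, this ``propagates'' to force $f=\mathrm{id}_M$. That is a real-analytic phenomenon and is false for PL homeomorphisms: a PL homeomorphism of $S^1$ (or of any manifold) can be the identity on a closed top-dimensional region and nontrivial elsewhere. Consequently $\mathrm{Fix}(f)$ may have nonempty interior, and your candidate $N_f$ --- the closure of the locus where the local homology of $\mathrm{Fix}(f)$ is that of $\mathbb{R}^d$ with $d$ maximal --- can then have codimension $0$, so your construction does not deliver the stated codimension bound. Second, you correctly identify but do not resolve the central difficulty: the closure of the top-dimensional manifold stratum is in general \emph{not} a closed manifold (a $2$-disk fixed pointwise in a $3$-manifold yields a manifold with boundary; three triangles sharing an edge yield a non-manifold), so the single-step definition cannot be ``verified'' to work --- it must be replaced. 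The paper instead descends: it takes the frontier $F\setminus\mathring{F}$ of $F=\mathrm{Fix}(f)$, which is a $g$-invariant subcomplex of strictly smaller dimension with empty interior (this is where the codimension bound actually comes from, since $F\neq M$ is closed), and if that is not yet a closed manifold it passes to its boundary. Your local-homology idea is exactly the right mechanism for getting $g$-invariance for free, but it has to be applied \emph{recursively} --- repeatedly passing to the topologically-defined singular/boundary locus until the dimension drops to a stage that is a closed manifold (in the worst case a finite set of points, which is a closed $0$-manifold) --- rather than in a single step.
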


\begin{proof}
Since $f$ is affine linear at each simplex $\sigma ,$ the intersection $%
\mathrm{Fix}(f)\cap \sigma $ is also a simplicial complex (as a solution of
a linear equation). Since $M$ is compact, $F=\mathrm{Fix}(f)$ is a union of
finite simplicial complexes and therefore a simplicial complex. If $f\neq 
\mathrm{id}_{M},$ $F\neq M.$ Since $F$ is closed and compact, the boundary $%
\partial \mathrm{Fix}(f)=F\backslash \mathring{F}$ is a manifold of
codimension at least $1,$ which is invariant under the action of $g.$ If $%
\partial \mathrm{Fix}(f)$ is a closed manifold, take $N_{f}=\partial \mathrm{%
Fix}(f).$ Otherwise, take $N_{f}=\partial (\partial \mathrm{Fix}(f)).$
\end{proof}

Witte \cite{Wi} proves that a lattice $\Gamma $ of $\mathbb{Q}$-rank at
least $2$ acts trivially on the circle $S^{1}$ by homeomorphisms. Ghys \cite%
{gh} and Burger-Monod \cite{bm} show that any lattice acts trivially on $%
S^{1}$ by $C^{1}$ homeomorphisms. The following result considers PL actions
for $1$-big lattices.

\begin{theorem}
\label{circle}Let $\Gamma $ be a $1$-big lattice. Then any action of $\Gamma 
$ on $S^{1}$ by PL homeomorphisms factors through a finite group action.
\end{theorem}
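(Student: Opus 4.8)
The plan is to reduce, via higher-rank rigidity on the circle, to an action on an interval and then apply the PL tools developed above. First I would pass to the finite-index subgroup $\Gamma^{+}=\ker(\Gamma\to\mathbb{Z}/2)$ acting by orientation-preserving PL homeomorphisms, where the homomorphism records whether an element preserves or reverses orientation; since the desired conclusion is that the action factors through a finite group, it is enough to treat $\Gamma^{+}$. Because $\Gamma$ is $1$-big it is, by definition, an irreducible lattice in a semisimple Lie group of $\mathbb{R}$-rank at least $2$; thus the results of Ghys \cite{gh} and Burger-Monod \cite{bm} (or, when $\mathbb{Q}$-rank$(\Gamma)\geq 2$, of Witte \cite{Wi}) apply to the topological action of $\Gamma^{+}$ on $S^{1}$ and produce a finite orbit $F\subset S^{1}$.

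Next I would use $F$ to manufacture a global fixed point for a deeper finite-index subgroup. The group $\Gamma^{+}$ permutes the finite set $F$ together with the finite collection of open arcs of $S^{1}\backslash F$, giving a homomorphism to a finite symmetric group; let $\Gamma'$ be its kernel, a finite-index subgroup fixing every point of $F$ and preserving each complementary arc. Fix one such arc together with an endpoint $p\in F$. Cutting $S^{1}$ at $p$ presents the closed arc as $[0,1]$ on which $\Gamma'$ acts by PL homeomorphisms fixing the two endpoints $\{0,1\}$, orientation-preservation guaranteeing that the endpoints are not swapped.

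Now the PL machinery finishes the argument. Since $\Gamma'$ is a finite-index subgroup of a higher-rank lattice, $H_{1}(\Gamma';\mathbb{R})=0$ (such lattices and their finite-index subgroups have finite abelianization), so Lemma \ref{ray} forces $\Gamma'$ to act trivially on each closed complementary arc, hence on all of $S^{1}$. Therefore the kernel of $\Gamma\to\mathrm{Homeo}_{\mathrm{PL}}(S^{1})$ contains the finite-index subgroup $\Gamma'$, so the kernel itself is of finite index and the action factors through a finite quotient. Alternatively, almost simplicity of $1$-big lattices yields the same conclusion directly, the kernel being a normal subgroup containing noncentral elements.

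The step I expect to be the main obstacle is extracting the honest finite orbit $F$ in the purely topological (PL) setting. The Ghys-Burger-Monod vanishing of the bounded Euler class gives, a priori, only a semiconjugacy to an action by finite-order rotations; turning this into a genuine $\Gamma^{+}$-invariant finite set requires controlling the minimal set and excluding an exceptional (Cantor) minimal set. For PL maps this should be tractable, since an individual PL circle homeomorphism has derivative of bounded variation and hence, by Denjoy's theorem, no exceptional minimal set when its rotation number is irrational; but assembling this into a group-level finite orbit is the delicate point. Once a fixed point for a finite-index subgroup is in hand, Lemma \ref{ray} renders the remainder routine.
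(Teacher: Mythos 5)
Your overall architecture (finite orbit $\Rightarrow$ pointwise-fixed finite set for a finite-index subgroup $\Rightarrow$ Lemma \ref{ray} on the closed complementary arcs $\Rightarrow$ almost simplicity) is sound, and the endgame is correct and in fact a clean substitute for the paper's appeal to Theorem \ref{rt}. But the step you yourself flag as the main obstacle is a genuine gap, not just a delicate point to be smoothed over. The theorems of Ghys \cite{gh} and Burger--Monod \cite{bm} do \emph{not} assert that every orientation-preserving $C^{0}$ action of an arbitrary irreducible higher-rank lattice on $S^{1}$ has a finite orbit: when the ambient semisimple group admits a surjection onto $\mathrm{PSL}(2,\mathbb{R})$ (e.g.\ Hilbert modular groups in $\mathrm{PSL}(2,\mathbb{R})\times \mathrm{PSL}(2,\mathbb{R})$), the projection to that factor gives a minimal circle action with no finite orbit; the general conclusion is only ``finite orbit \emph{or} semiconjugate to such a boundary action.'' A $1$-big lattice is by definition an irreducible lattice in a higher-rank semisimple group, but nothing in the definition visibly excludes rank-one factors of this kind, so your citation does not deliver the finite set $F$ as stated. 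You would either have to prove that $1$-bigness rules out the exceptional case, or supply the PL-specific argument you only gesture at.

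The paper closes exactly this gap differently, and never needs a group-level finite orbit. It uses the structure of $1$-bigness directly: there is $\gamma\in\Gamma$ whose centralizer contains a higher-rank irreducible lattice $\Lambda$, and one examines the rotation number of the single PL homeomorphism $\phi(\gamma)$. If it is rational, a power of $\phi(\gamma)$ has nonempty fixed set, Lemma \ref{char} extracts from it a finite $\Lambda$-invariant set of points, and one concludes with Theorem \ref{rt} (your arc-cutting plus Lemma \ref{ray} would work equally well here). If it is irrational, the Denjoy theorem --- which applies to the \emph{individual} PL map $\phi(\gamma)$, whose derivative has bounded variation --- conjugates $\phi(\gamma)$ to the irrational rotation $T_{\alpha}$; since $\langle T_{\alpha}\rangle$ is dense in the rotation group and $\phi(\Lambda)$ commutes with it, $\phi(\Lambda)$ is a group of rotations, hence abelian, hence finite (as $H_{1}(\Lambda;\mathbb{R})=0$), and almost simplicity finishes the proof. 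In other words, in the irrational case one does not find a finite orbit at all; one shows the image of $\Lambda$ is already finite. Your proposal identifies the right tool (Denjoy for PL maps) but stops short of the centralizer argument that actually makes it work, and that argument is the substantive content of the paper's proof of Theorem \ref{circle}.
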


\begin{proof}
Let $\phi :\Gamma \rightarrow \mathrm{Home}_{\mathrm{PL}}(S^{1})$ be a group
homomorphism. If the $\mathbb{Q}$-rank of $\Gamma $ is at least $2,$ Witte 
\cite{Wi} shows that $\Gamma $ acts trivially on $S^{1}.$ Hence, by the
definition of $1$-big, we may assume that there is an element $\gamma \in
\Gamma $ such that the centralizer $C_{\Gamma }(\gamma )$ contains a
subgroup $\Lambda $ isomorphic to an irreducible lattice in some semisimple
Lie group of $\mathbb{R}$-rank at least $2.$ By passing to a subgroup of
index $2$ if necessary, we may assume that the action is
orientation-preserving.

Suppose that $\tilde{\gamma}:\mathbb{R}\rightarrow \mathbb{R}$ is a lifting
of $\phi (\gamma ).$ Define the \emph{rotation number} of $\phi (\gamma )$
as 
\begin{equation*}
\alpha =\lim\limits_{n\rightarrow +\infty }\gamma ^{n}(x)/n.
\end{equation*}%
Such number is independent of $x$ (cf. Theorem 1, p. 74 in \cite{cfs}).
Moreover, $\alpha $ is rational if and only if $\phi (\gamma )^{k}$ has a
fixed point for some integer $k.$ We prove the theorem in two cases.

\begin{enumerate}
\item[(1)] \noindent $\alpha $ is rational.
\end{enumerate}

This means that the fixed point set $\mathrm{Fix}(\phi (\gamma ^{k}))$ is
not empty. If $\gamma ^{k}\in \ker \phi ,$ the normal subgroup generated by $%
\gamma ^{k}$ in $\Gamma $ is of finite index. Therefore, $\phi $ factors
through a finite group. If $\phi (\gamma ^{k})\neq \mathrm{id}_{S^{1}},$
Lemma \ref{char} implies that there is a manifold $N$ of codimension at
least $1$ in $\mathrm{Fix}(\phi (\gamma ^{k}))$ and $N$ is invariant under
the action of $\Lambda .$ Since $S^{1}$ is compact, $N$ has only finitely
many components, i.e. $N$ is a set of finitely many points. Then for some
finite-index subgroup $\Lambda _{0}$ of $\Lambda ,$ $\Lambda _{0}$ has a
global fixed point. Since $\Lambda _{0}/[\Lambda _{0},\Lambda _{0}]$ is
finite and the group action is orientation preserving, the action $\Lambda
_{0}$ is trivial by the PL version of Reeb-Thurston's stability (cf. Theorem %
\ref{rt}). Note that the normal subgroup generated by $\Lambda _{0}$ in $%
\Gamma $ is of finite index. Therefore, $\phi $ factors through a finite
group.

\begin{enumerate}
\item[(2)] \noindent $\alpha $ is irrational.
\end{enumerate}

The proof is similar to that of Theorem 1.1 in Farb and Shalen \cite{fs}. We
briefly repeat it here for completeness. The Denjoy Theorem (cf. \cite{ms},
p.38) says that there is an element $\rho \in \mathrm{Home}(S^{1})$ such
that $T_{\alpha }:=\rho \phi (\gamma )\rho ^{-1}$ is the rotation by angle $%
\alpha .$ The cyclic group $\langle T_{\alpha }\rangle $ generated by $%
T_{\alpha }$ is dense in the group $R$ of rotations of $S^{1},$ since $%
\alpha $ is irrational. Then the subgroup $\rho \phi (\Lambda )\rho ^{-1}$
commutes with $\langle T_{\alpha }\rangle $ and thus commutes with $R.$
Since a homeomorphism commuting with any rotation must be a rotation, $\rho
\phi (\Lambda )\rho ^{-1}$ is a subgroup of $R.$ Therefore, there is a
noncentral element $\gamma ^{\prime }\in \Lambda $ satisfying $\phi (\gamma
^{\prime })=\mathrm{id}_{S^{1}}.$ The normal subgroup generating by $\gamma
^{\prime }$ is of finite index and therefore $\phi $ factors through a
finite group.
\end{proof}

\bigskip

\begin{proof}[Proof of Theorem \protect\ref{main}]
Suppose that $\phi :\Gamma \rightarrow \mathrm{Home}_{\mathrm{PL}}(M)$ is a
group homomorphism. First, let us consider the case when $M$ is a closed
surface other than the torus or Klein bottle. We will show that the image $%
\func{Im}\phi $ is finite. Suppose that there is an element $\gamma \in
\Gamma $ such that the centralizer $C_{\Gamma }(\gamma )$ contains a
subgroup $\Lambda $ isomorphic to a $1$-big irreducible lattice in some
semisimple Lie group$.$ Denote by $f=\phi (\gamma ).$ Since the Euler
characteristic of $M$ is nonzero, Lemma \ref{fuller} implies that for some
integer $k,$ the fixed point set $\mathrm{Fix}(f^{k})$ is not empty. Then
the subgroup $\Lambda $ acts on $\mathrm{Fix}(f^{k}).$ There two cases to
consider.

\begin{enumerate}
\item[(1)] \noindent $\mathrm{Fix}(f^{k})=M$.
\end{enumerate}

The element $\gamma ^{k}$ acts trivially on $M,$ i.e. $\gamma ^{k}\in \ker
\phi .$ The normal subgroup generated by $\gamma ^{k}$ is of finite index by
Magulis finiteness theorem. This shows that $\func{Im}\phi $ is finite.

\begin{enumerate}
\item[(2)] $\mathrm{Fix}(f^{k})\neq M.$
\end{enumerate}

Lemma \ref{char} implies that there is a manifold $N$ of codimension at
least $1$ in $\mathrm{Fix}(\phi (\gamma ^{k}))$ and $N$ is invariant under
the action of $\Lambda .$ If $\dim N=1,$ $N$ consists of finitely many
copies of $S^{1}.$ Therefore, some subgroup $\Lambda _{0}$ of finite index
in $\Lambda $ will act on a $S^{1}.$ By Theorem \ref{circle}, there exists a
finite-index normal subgroup $\Lambda ^{\prime }$ of $\Lambda _{0}$ acting
trivially on $S^{1}\subset N.$ Since the $\Lambda ^{\prime }$ is $1$-big, $%
\Lambda ^{\prime }$ acts trivially on the tangent sphere $S^{1}$ of a point $%
p\in S^{1}\subset N$ by Theorem \ref{circle} once again. Note that $%
H_{1}(\Lambda ^{\prime };\mathbb{R})=0.$ The PL version of Reeb-Thurston's
stability (cf. Theorem \ref{rt}) implies that the subgroup $\Lambda ^{\prime
}$ acts trivially on $M$. The normal subgroup generated by $\Lambda ^{\prime
}$ in $\Gamma $ is of finite index. Therefore $\func{Im}\phi $ is finite.
The case of $\dim N=0$ could be considered similarly.

If the boundary $\partial M\neq \varnothing ,$ $\partial M$ is a disjoint
union of finitely many circles (since $M$ is compact). It is clear that $%
\Gamma $ maps $\partial M$ to itself. Therefore for some finite-index
subgroup $\Gamma _{0}$ of $\Gamma ,$ a component $S^{1}$ of $\partial M$ is
invariant under the action of $\Gamma _{0}.$ Since $\Gamma _{0}$ is also a $%
2 $-big lattice, there exists an infinite-order element $\gamma ^{\prime
}\in \Gamma _{0}$ such that the centralizer $C_{\Gamma _{0}}(\gamma ^{\prime
})$ contains a subgroup $G,$ which is $1$-big. Theorem \ref{circle} implies
that there is a finite-index subgroup $G_{0}$ of $G$ acting trivially on $%
S^{1}.$ For a point $p\in S^{1}\subset \partial M,$ the tangent sphere $%
R_{p} $ is homeomorphic to the interval $[0,1]$ (cf. Lemma \ref{sph}). By
Lemma \ref{ray}, the group $G_{0}$ acts trivially on $R_{p}.$ Similar to
case $(2), $ Theorem \ref{rt} implies that $G_{0}$ acts trivially on $M.$
The normal subgroup in $\Gamma $ generated by $G_{0}$ is of finite index.
Therefore, $\phi (\Gamma )$ is finite. The proof is finished.
\end{proof}

\bigskip

\begin{proof}[Proof of Theorem \protect\ref{three}]
Since $M$ is compact and the Euler characteristic $\chi (\partial M)\neq 0,$
there is a connected closed surface $\Sigma \subset \partial M$ with $\chi
(\Sigma )\neq 0.$ It is obvious that the boundary $\partial M$ is invariant
under the action of $\Gamma .$ Therefore for some finite-index subgroup $%
\Gamma _{0},$ $\Sigma $ is invariant under the action of $\Gamma _{0}.$
Since $\Gamma _{0}$ is $2$-big, Theorem \ref{main} implies that the action
is trivial for some finite-index subgroup $\Lambda $ of $\Gamma _{0}.$ It's
not hard to see that the tangent sphere $R_{p}$ for $p\in \Sigma $ is
homeomorphic to a disk $D^{2}$ (cf. Lemma \ref{sph}), whose boundary is
fixed by $\Lambda .$ By Theorem \ref{rt} and Lemma \ref{ray}, $\Lambda $
acts trivially on $\Sigma .$ Using Theorem \ref{rt} once again, $\Lambda $
acts trivially on $M.$ The normal subgroup generated by $\Lambda $ in $%
\Gamma $ is of finite index$.$ Therefore, $\Gamma $ factors through a finite
group.
\end{proof}

\bigskip

\noindent \textbf{Acknowledgements}

The author is grateful to Marc Lackenby, Will Cavendish at Oxford and Feng
Ji, Zihong Yuan at Singapore for many helpful discussions.

\bigskip

Mathematical Institute, University of Oxford, 24-29 St Giles', Oxford, OX1 3LB, UK. 

E-mail: Shengkui.Ye@maths.ox.ac.uk

\end{document}